\renewcommand{\(}{\left\(}
\renewcommand{\)}{\right\)}
\renewcommand{\[}{\left\[}
\renewcommand{\]}{\right\]}
\numberwithin{equation}{section}
 \theoremstyle{plain}
\newtheorem{theorem}{Theorem}[section]
\newtheorem{corollary}[theorem]{Corollary}
\newtheorem{proposition}[theorem]{Proposition}
\def\proof{\@ifnextchar[{\@oproof}{\@nproof}}
\def\@oproof[#1][#2]{\trivlist\item[\hskip\labelsep\textit{#2 Proof of\
#1.}~]\ignorespaces}
\def\@nproof{\trivlist\item[\hskip\labelsep\textit{Proof.}~]\ignorespaces}
\begin{document}
\title[Minimal excludant over partitions into distinct parts ]{Minimal excludant over partitions into distinct parts}


\author{Prabh Simrat Kaur}
\address{Prabh Simrat Kaur,  School of Mathematics\\
Thapar Institute of Engineering and Technology\\
Patiala 147004,  India.}
\email{prabh.simrat17@gmail.com}

\author{Subhash Chand Bhoria}
\address{Subhash Chand Bhoria, Pt. Chiranji Lal Sharma Government College, Urban Estate, Sector-14, Karnal,  Haryana 132001, India.}
\email{scbhoria89@gmail.com}

\author{Pramod Eyyunni}
\address{Pramod Eyyunni, Discipline of Mathematics,
Indian Institute of Technology Indore, Simrol, Indore, Madhya Pradesh - 453552, India. \newline
(Former) Indian Institute of Science Education and Research Berhampur, Industrial Training Institute (ITI) Berhampur, Engineering School Road, Berhampur, Odisha - 760010, India.} 
\email{pramodeyy@gmail.com}

\author{Bibekananda Maji}
\address{Bibekananda Maji\\ Discipline of Mathematics \\
Indian Institute of Technology Indore \\
Indore, Simrol, Madhya Pradesh 453552, India.} 
\email{bibekanandamaji@iiti.ac.in}

\thanks{$2020$ \textit{Mathematics Subject Classification.} Primary 11P81, 11P82,  05A17 ; Secondary 05A19. \\
\textit{Keywords and phrases.} Minimal excludant,  MEX,  Maximal excludant,  Distinct parts partition, Asymptotic formula.}


\begin{abstract}
The average size of the ``smallest gap" of a partition was studied by Grabner and Knopfmacher in 2006. Recently, Andrews and Newman, motivated by the work of Fraenkel and Peled, studied the concept of the ``smallest gap" 
under the name ``minimal excludant" of a partition and rediscovered a  result of Grabner and Knopfmacher. In the present paper, we study the 
sum of the minimal excludants over partitions into distinct parts,
and interestingly we observe that it has a nice connection with Ramanujan's function $\sigma(q)$. As an application, we derive a stronger version of a result of Uncu.
\end{abstract}
\maketitle

\section{Introduction}\label{intro}
Grabner and Knopfmacher \cite{grabner} studied an interesting partition statistic under the name `smallest gap'. They defined the smallest gap of an integer partition as the least integer missing from the partition.  Fraenkel and Peled introduced the concept of a {\it minimal excludant} of a set $S$ of positive integers, namely, the least positive integer missing from the set, denoted by ``$\textup{mex}(S)$". Recently, in $2019$, Andrews and Newman explored the idea of minimal
excludant and in the process, rediscovered a result of Grabner and Knopfmacher on “smallest
gap”. They very naturally generalized this concept to other arithmetic progressions in their
two papers \cite{andrewsnewmanI,  andrewsnewmanII}.  Let us define
\begin{equation}\label{sigmamex}
 \sigma\textup{mex}(n) := \sum_{\pi \in \mathcal{P}(n)} \textup{mex}(\pi),
\end{equation}
where $\mathcal{P}(n)$ denotes the collection of all integer partitions of $n$. Interestingly,
Andrews and Newman \cite[Theorem 1.1]{andrewsnewmanI} proved that
\begin{equation}\label{combinatorial_sigmamex}
\sigma\textup{mex}(n) = D_2(n),
\end{equation}
where $D_2(n)$ represents the number of two-colored partitions of $n$ into distinct parts. 
The generating function of the above identity was, in fact, obtained by Grabner and Knopfmacher \cite[Theorem 3]{grabner} in their work on the 
smallest gap. They also obtained the following Hardy-Ramanujan-Rademacher type exact formula for  $ \sigma\textup{mex}(n)$:
\begin{align}\label{HRR type}
\sigma\textup{mex}(n) = \frac{\pi}{ 2 \sqrt{6\left(n+ \frac{1}{12}  \right)} } \sum_{k=1}^{\infty} \frac{A_{2k-1}(n)}{2k-1} I_1\left( \pi  \frac{ \sqrt{ 2\left(n+ \frac{1}{12}  \right)} }{\sqrt{3} (2k-1)} \right),
\end{align}
where 
\begin{align*}
A_{k}(n) = \sum_{0 \leq h <  k \atop \gcd(h,k)=1} \exp \left( 2 \pi i \left( s(h,k) - s(2h,k) - \frac{hn}{k}  \right)   \right),
\end{align*}
and $s(h,k)$ denotes the Dedekind sum,  and $I_1$ denotes the modified Bessel function of the first kind with index $1$.
Moreover,  $\sigma\textup{mex}(n)$ satisfies the following asymptotic formula:
\begin{align*}
\sigma\textup{mex}(n) \sim  \frac{1}{4  \sqrt[4]{6n^3}} \exp\left( \pi \sqrt{\frac{2n}{3}}   \right) \quad \textrm{as} \, \, n \rightarrow \infty.
\end{align*}
A combinatorial proof of \eqref{combinatorial_sigmamex} has been obtained by Ballantine and Merca \cite{ballantine-merca}.
Recently, Chern \cite{chernmaxexcludnt} defined {\it maximal excludant} $``\textup{maex}(\pi)"$ as the largest non-negative integer smaller than the largest part of $\pi$ that is not a part of $\pi$. Analogous to \eqref{sigmamex}, Chern defined
\begin{align*}
\sigma\textup{maex}(n) := \sum_{\pi \in \mathcal{P}(n)} \textup{maex}(\pi).
\end{align*}
He \cite[Theorem 1.1]{chernmaxexcludnt} obtained the following generating function identity for $\sigma\textup{maex}(n)$: 
\begin{align}\label{gen_sigmamaex}
\sum_{n=1}^\infty \sigma\textup{maex}(n) q^n = \sum_{n=1}^\infty \frac{n}{(q;q)_{n-1}} \sum_{m=1}^\infty q^{m(n+1)} (-q;q)_{m-1}. 
\end{align}
Chern \cite[Theorem 1.3]{chernmaxexcludnt} also established an asymptotic formula  for $\sigma\textup{maex}(n)$.  More precisely,  he showed that 
\begin{align*}
\sigma\textup{maex}(n) \sim \sigma L(n), \quad \textrm{as} \,\,  n  \rightarrow  \infty,
\end{align*}
where
$\sigma L(n)$ denotes the sum of  the largest parts of all partitions of $n$.  Kessler and Livingston showed that the following Hardy-Ramanujan type asymptotic formula holds for $\sigma L(n)$: 
\begin{align*}
\sigma L(n) \sim  \frac{\log \frac{6n}{\pi^2} + 2 \gamma }{4 \pi \sqrt{2n}} \exp\left( \pi \sqrt{\frac{2n}{3}} \right), \quad \textrm{as} \,\,  n  \rightarrow  \infty.
\end{align*}
Chern \cite[Theorem 2.2]{chernmaxexcludnt} also derived a formula for $\sigma\textup{maex}(n)$,  that connects the divisor function and the coefficients of Ramanujan's  $q$-series $\sigma(q)$,  defined in \eqref{sigma(q)} below.

In \cite{andrewsnewmanI}, Andrews and Newman studied another arithmetic function, namely, 
\begin{align*}
a(n)= \sum_{ \pi \in \mathcal{P}(n) \atop \textup{mex}(\pi) \, \textup{odd}} 1.
\end{align*} 
They\footnote{Note that there is a typo in  \cite[p.~250, Theorem 1.2]{andrewsnewmanI},  in which the words odd and even are exchanged.} observed that $ \sigma\textup{mex}(n)\equiv a(n) \pmod 2$ and that $a(n)$ is almost always even and is odd exactly when $n$ is of the form $j(3j\pm 1)$. 

We study the questions raised by Andrews and Newman for the function $\sigma \textup{mex}(n)$ in \cite{andrewsnewmanI},
but restricted to partitions into distinct parts. We define the function $\sigma_d\textup{mex}(n)$ by
\begin{equation}\label{sigma_d defn}
\sigma_d \textup{mex}(n) := \sum_{\pi \in \mathcal{D}(n)} \textup{mex}(\pi),
\end{equation}
where $\mathcal{D}(n)$ denotes the collection of partitions of $n$ into distinct parts. We also define
\begin{align*}
a_d(n):=   \sum_{ \pi \in \mathcal{D}(n) \atop \textup{mex}(\pi) \, \textup{odd}} 1.
\end{align*}
In fact, the generating function for $a_d(n)$ was considered by Uncu \cite{uncuspin} in a different combinatorial context. 

In the next section we state the main results. 

\section{Main Results}
Before stating the main results, let us begin with one of the most important $q$-series of Ramanujan, which has been a constant source of study from the point of view of both algebraic and analytic number theory. It is given by 
\begin{align}\label{sigma(q)}
\sigma(q) := \sum_{n=0}^{\infty} \frac{q^{\frac{n(n+1)}{2}}}{(-q; q)_n }.
\end{align}

Readers are encouraged to see \cite{andrewslostV1986}, \cite{andrewsmonthly86} and \cite{adh},  for deeper results related to $\sigma(q)$.  Surprisingly,  the generating function for $\sigma_d\textup{mex}(n)$ is directly connected to $\sigma(q)$ as stated below.
\begin{theorem}\label{sigma_d_mex}
We have
\begin{align*}
\sum_{n=0}^{\infty}\sigma_d \textup{mex}(n)q^n=(-q;q)_{\infty}\sigma(q).
\end{align*}
\end{theorem}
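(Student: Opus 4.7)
The plan is to interpret the generating function combinatorially and then manipulate the $q$-Pochhammer symbols to produce $\sigma(q)$. Writing
\[
\sum_{n=0}^{\infty} \sigma_d\textup{mex}(n) q^n = \sum_{\pi \in \mathcal{D}} \textup{mex}(\pi)\, q^{|\pi|},
\]
I would first unfold $\textup{mex}(\pi)$ as a sum of indicators using the tautology
\[
\textup{mex}(\pi) = \sum_{k=1}^{\infty} \mathbf{1}\bigl[\textup{mex}(\pi) \geq k\bigr] = \sum_{k=1}^{\infty} \mathbf{1}\bigl[\{1,2,\ldots,k-1\} \subseteq \pi\bigr].
\]
Interchanging the order of summation, the inner sum over $\pi \in \mathcal{D}$ with $\{1,2,\dots,k-1\}$ as a subset of its parts splits as the fixed contribution $q^{1+2+\cdots+(k-1)} = q^{k(k-1)/2}$ from the forced parts, times the generating function $(-q^{k};q)_\infty$ for the remaining (distinct) parts, which must all be at least $k$.

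Thus I arrive at the identity
\[
\sum_{n=0}^{\infty} \sigma_d\textup{mex}(n)\, q^n = \sum_{k=1}^{\infty} q^{k(k-1)/2} (-q^{k};q)_\infty.
\]
Reindexing by $n = k-1$ and using
\[
(-q^{n+1};q)_\infty = \frac{(-q;q)_\infty}{(-q;q)_n},
\]
the right-hand side becomes
\[
\sum_{n=0}^{\infty} q^{n(n+1)/2} (-q^{n+1};q)_\infty = (-q;q)_\infty \sum_{n=0}^{\infty} \frac{q^{n(n+1)/2}}{(-q;q)_n} = (-q;q)_\infty\, \sigma(q),
\]
which is the claimed identity.

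There is no real obstacle here; the only point requiring a touch of care is the bookkeeping around $k=1$ (corresponding to $\textup{mex}(\pi)=1$, including the empty partition) to confirm that the empty-set condition $\{1,\ldots,0\}\subseteq \pi$ holds vacuously, so the $k=1$ summand is simply $(-q;q)_\infty$. After the reindexing, the Pochhammer rearrangement is routine, and the identification with $\sigma(q)$ as defined in \eqref{sigma(q)} is immediate.
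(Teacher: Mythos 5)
Your argument is correct and is essentially the paper's second proof in different clothing: the indicator decomposition $\textup{mex}(\pi)=\sum_{k\ge 1}\mathbf{1}[\textup{mex}(\pi)\ge k]$ is exactly the paper's identity $\sigma_d\textup{mex}(n)=\sum_{i\ge 0}\mathcal{D}_i(n)$, and your evaluation of the inner sum as $q^{k(k-1)/2}(-q^{k};q)_\infty$ is the same bijection (peel off the forced parts $1,\dots,k-1$) that the paper uses to show $\sum_n \mathcal{D}_i(n)q^n = q^{i(i+1)/2}(-q^{i+1};q)_\infty$. No gaps; the bookkeeping at $k=1$ is handled correctly.
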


The next result gives us an asymptotic formula for $\sigma_d\textup{mex}(n)$.
\begin{theorem}\label{asym_sigma_d_mex}
We have
\begin{align} \label{asym_sigma_d_mex_equation}
\sigma_d{\rm mex}(n)   \sim   \frac{  \exp\left( \pi \sqrt{\frac{n}{3}}   \right) }{2 (3n^3)^{1/4} }, \quad  \textrm{as}\,\,  n \rightarrow \infty.
\end{align}
\end{theorem}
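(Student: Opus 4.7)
The plan is to combine the generating function identity of Theorem \ref{sigma_d_mex} with a Tauberian theorem of Ingham type. Writing
\begin{align*}
F(q) := \sum_{n=0}^\infty \sigma_d\textup{mex}(n)\, q^n = (-q;q)_\infty\, \sigma(q),
\end{align*}
the task reduces to analysing $F(e^{-t})$ as $t \to 0^+$ and then passing to coefficients. The first factor is classical: by the modular transformation of the Dedekind eta function,
\begin{align*}
(-e^{-t};e^{-t})_\infty \;=\; \frac{(e^{-2t};e^{-2t})_\infty}{(e^{-t};e^{-t})_\infty} \;\sim\; \frac{1}{\sqrt{2}}\, \exp\!\left(\frac{\pi^2}{12\, t}\right) \qquad \text{as } t \to 0^+.
\end{align*}

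For the second factor, I would show that $\sigma(e^{-t}) \to 2$ as $t \to 0^+$. The AM--GM inequality gives $1+q^k \ge 2q^{k/2}$, hence $(-q;q)_n \ge 2^n q^{n(n+1)/4}$ for $q \in (0,1)$, and therefore
\begin{align*}
0 \;\le\; \frac{q^{n(n+1)/2}}{(-q;q)_n} \;\le\; \frac{q^{n(n+1)/4}}{2^n} \;\le\; \frac{1}{2^n}.
\end{align*}
This uniform summable majorant justifies exchanging $\lim_{q\to 1^-}$ with $\sum_n$; since each summand tends pointwise to $2^{-n}$, one obtains $\lim_{q\to 1^-}\sigma(q) = \sum_{n\ge 0} 2^{-n} = 2$. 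Combining the two steps gives $F(e^{-t}) \sim \sqrt{2}\, \exp(\pi^2/(12\, t))$ as $t \to 0^+$.

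Ingham's Tauberian theorem then transfers this to coefficient asymptotics: if $\sum a_n q^n$ has non-negative (essentially monotone) coefficients and $f(e^{-t}) \sim A\, t^{\beta}\, e^{\lambda/t}$, then
\begin{align*}
a_n \;\sim\; \frac{A\, \lambda^{\beta/2 + 1/4}}{2\sqrt{\pi}\, n^{\beta/2 + 3/4}}\, \exp\!\left(2\sqrt{\lambda n}\right).
\end{align*}
Substituting $A = \sqrt{2}$, $\beta = 0$, $\lambda = \pi^2/12$ and simplifying via $12^{1/4} = \sqrt{2}\cdot 3^{1/4}$ produces exactly \eqref{asym_sigma_d_mex_equation}. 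The main obstacle will be verifying the weak-monotonicity hypothesis of Ingham's theorem for $\sigma_d\textup{mex}(n)$; should this prove awkward, one can instead carry out a direct saddle-point analysis of the Cauchy integral $\oint F(q)\, q^{-n-1}\, dq$ on the circle $|q| = e^{-\pi/\sqrt{12 n}}$, where the major-arc piece about $q = e^{-\pi/\sqrt{12 n}}$ produces the stated main term (using the two asymptotics above) and the minor-arc piece is controlled by the standard bound for $|(-q;q)_\infty|$ off the real axis together with the obvious bound $|\sigma(q)| \le \sigma(|q|)$.
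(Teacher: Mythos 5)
Your route is essentially the paper's: the generating function of Theorem \ref{sigma_d_mex}, the eta-transformation asymptotic for $(-e^{-t};e^{-t})_\infty$, the limit $\sigma(e^{-t})\to 2$, and Ingham's Tauberian theorem with $\alpha=\sqrt{2}$, $\beta=0$, $C=\pi^2/12$; your constant bookkeeping ($12^{1/4}=\sqrt{2}\cdot 3^{1/4}$) is correct. One point where you improve on the paper: for $\sigma(q)\to 2$ the paper simply quotes Zagier's asymptotic expansion $\sigma(e^{-t})=2-2t+5t^2-\cdots$, whereas your AM--GM bound $(-q;q)_n\ge 2^n q^{n(n+1)/4}$ gives the summable majorant $2^{-n}$ and a clean dominated-convergence argument; this is self-contained and perfectly adequate, since only the leading constant $2$ is needed.

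The genuine gap is the one you yourself flag and then leave unresolved: Ingham's theorem requires the coefficients to be (weakly) increasing and non-negative, and without that hypothesis the conclusion can fail, so this is not an optional check. The paper devotes Proposition \ref{monotonic sigma d} to proving $\sigma_d\textup{mex}(n+1)>\sigma_d\textup{mex}(n)$ for $n\ge 7$, via the injection $f:\mathcal{D}(n)\to\mathcal{D}(n+1)$ that adds $1$ to the largest part: when $n$ is not triangular $f$ preserves the minimal excludant, when $n$ is triangular exactly one partition (the staircase) loses $1$ from its mex, and in either case one exhibits explicit partitions of $n+1$ outside the image of $f$ (such as $\left(\frac{n+1}{2},\frac{n-1}{2},1\right)$ or $\left(\frac{n}{2}+1,\frac{n}{2}\right)$, which have their two largest parts consecutive) contributing enough to force strict growth. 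Your proposed fallback --- a saddle-point analysis of the Cauchy integral --- would indeed avoid monotonicity altogether, but as written it is only a sketch: the minor-arc estimate for $(-q;q)_\infty$ off the positive real axis and the uniformity of the major-arc expansion would each need to be carried out, which is considerably more work than the combinatorial injection. To complete your proof as planned, you need to supply an argument of the type in Proposition \ref{monotonic sigma d}.
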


Now we will state a result due to Uncu \cite[Theorem 3]{uncuspin}.
\begin{theorem}
Let $U(n)$ be the sequence of numbers defined by
\begin{align*}
\sum_{n=0}^{\infty} U(n) q^n = (-q; q)_{\infty} \sum_{n = 0}^{\infty} \frac{(-1)^n q^{\binom {n+1}{2}}}{(-q; q)_n}.
\end{align*}
Then $U(n) \geq 0$ for all $n \geq 0$.
\end{theorem}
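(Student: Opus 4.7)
The plan is to strengthen Uncu's inequality to the exact identity $U(n) = a_d(n)$, from which the non-negativity is immediate. My first step is to pull $(-q;q)_\infty$ inside the sum using $(-q;q)_\infty/(-q;q)_n = (-q^{n+1};q)_\infty$, which rewrites Uncu's generating function as
\begin{align*}
\sum_{n=0}^{\infty} U(n)\, q^n = \sum_{n=0}^{\infty} (-1)^n q^{\binom{n+1}{2}} (-q^{n+1};q)_\infty.
\end{align*}
This is exactly the manoeuvre that underlies Theorem \ref{sigma_d_mex}; the only new feature is the alternating sign.

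Next I would give a combinatorial reading of each summand. The factor $q^{\binom{n+1}{2}}(-q^{n+1};q)_\infty$ is the generating function for partitions in $\mathcal{D}$ that contain the staircase $1+2+\cdots+n$, the remaining parts being distinct and strictly greater than $n$. Equivalently, $\pi \in \mathcal{D}$ is counted by the $n$-th term precisely when $\textup{mex}(\pi) > n$, i.e.\ for $n = 0, 1, \ldots, \textup{mex}(\pi) - 1$. Interchanging the order of summation, each $\pi \in \mathcal{D}(N)$ therefore contributes
\begin{align*}
\sum_{n=0}^{\textup{mex}(\pi)-1} (-1)^n = \begin{cases} 1 & \text{if } \textup{mex}(\pi) \text{ is odd,} \\ 0 & \text{if } \textup{mex}(\pi) \text{ is even,} \end{cases}
\end{align*}
to the coefficient of $q^N$. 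Summing over $\pi \in \mathcal{D}(N)$ yields $U(N) = a_d(N)$, which is manifestly non-negative.

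I do not anticipate any serious obstacle: the argument is a sign-sensitive version of the expansion that drives Theorem \ref{sigma_d_mex}, and the only crucial observation is that $\sum_{n=0}^{m-1}(-1)^n$ is the indicator of the parity of $m$. The bonus is that the identity $U(n) = a_d(n)$ transfers any analytic or combinatorial information about $a_d(n)$ — for instance, an asymptotic formula derivable in the same spirit as Theorem \ref{asym_sigma_d_mex} — directly to Uncu's sequence.
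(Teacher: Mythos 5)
Your argument is correct, and it reaches the same destination as the paper --- the identity $U(n)=a_d(n)$, from which non-negativity (and in fact positivity for $n\neq 1$, cf.\ Corollary \ref{gen_uncu}) is immediate --- but by a genuinely different mechanism. The paper starts from the combinatorial side: it writes down the generating function of $a_d(n)$ directly as $(-q;q)_{\infty}\sum_{n\geq 0} q^{\binom{2n+1}{2}}/(-q;q)_{2n+1}$ (only odd mex values contribute) and then converts this manifestly positive series into Uncu's alternating series by a telescoping $q$-series manipulation, splitting $1=1-\tfrac{q^{2n+1}}{1+q^{2n+1}}$ to interleave the even- and odd-indexed terms. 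You instead start from Uncu's series, absorb $(-q;q)_{\infty}$ into each term via $(-q;q)_{\infty}/(-q;q)_n=(-q^{n+1};q)_{\infty}$, recognize $q^{\binom{n+1}{2}}(-q^{n+1};q)_{\infty}$ as the generating function of the quantity $\mathcal{D}_n(N)$ (distinct-parts partitions with $\textup{mex}>n$) exactly as in the second proof of Theorem \ref{sigma_d_mex}, and collapse the alternating sum $\sum_{n=0}^{\textup{mex}(\pi)-1}(-1)^n$ to the parity indicator of $\textup{mex}(\pi)$. The interchange of summation is harmless since for fixed $N$ only the finitely many $n$ with $\binom{n+1}{2}\leq N$ contribute. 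What your route buys is a uniform treatment: the signed and unsigned cases ($U(n)=a_d(n)$ and $\sigma_d\textup{mex}(n)=\sum_i \mathcal{D}_i(n)$) become the same computation with different weights on $\mathcal{D}_n(N)$, and no separate $q$-series identity is needed. What the paper's route buys is the intermediate closed form $(-q;q)_{\infty}\sum_{n\geq 0} q^{\binom{2n+1}{2}}/(-q;q)_{2n+1}$ for the generating function of $a_d(n)$, which exhibits the positivity termwise and records a $q$-series identity of independent interest.
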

Interestingly, we observe that the generating function for $U(n)$ and $a_d(n)$ are indeed the same.

\begin{theorem}\label{a_d(n)}
We have
\begin{equation}\label{a_dfinalgfn}
\sum_{n=0}^{\infty} U(n) q^n =  \sum_{n = 0}^{\infty}  a_d(n) q^n = (-q; q)_{\infty} \sum_{n = 0}^{\infty} \frac{(-1)^n q^{\binom {n+1}{2}}}{(-q; q)_n}.
\end{equation}
\end{theorem}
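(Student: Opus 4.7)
The plan is to derive the generating function for $a_d(n)$ directly from the combinatorial definition and then massage the resulting expression into the form on the right-hand side of \eqref{a_dfinalgfn}. The key observation is that for partitions into distinct parts the minimal excludant completely pins down an initial block of forced parts.

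First, I would classify the partitions counted by $a_d(n)$ according to the value of the minimal excludant. If $\pi \in \mathcal{D}(n)$ satisfies $\textup{mex}(\pi) = m$, then $1, 2, \ldots, m-1$ all occur as parts of $\pi$ while $m$ does not, and since $\pi$ has distinct parts the remaining parts form a (possibly empty) set of distinct integers each $\geq m+1$. Consequently the generating function for distinct-part partitions with $\textup{mex}(\pi) = m$ is $q^{\binom{m}{2}}(-q^{m+1};q)_{\infty}$. Writing $(-q^{m+1};q)_{\infty} = (-q;q)_{\infty}/(-q;q)_m$, restricting to odd $m = 2k+1$, and using $\binom{2k+1}{2} = k(2k+1)$, I obtain
\begin{align*}
\sum_{n=0}^{\infty} a_d(n)\, q^n \;=\; (-q;q)_{\infty} \sum_{k=0}^{\infty} \frac{q^{k(2k+1)}}{(-q;q)_{2k+1}}.
\end{align*}

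Second, it would suffice to establish the $q$-series identity
\begin{align*}
\sum_{k=0}^{\infty} \frac{q^{k(2k+1)}}{(-q;q)_{2k+1}} \;=\; \sum_{n=0}^{\infty} \frac{(-1)^n q^{\binom{n+1}{2}}}{(-q;q)_n}.
\end{align*}
I would prove this by splitting the right-hand sum according to the parity of $n$ and pairing the contributions of $n = 2k$ and $n = 2k+1$. Using the elementary relation $(-q;q)_{2k+1} = (1+q^{2k+1})(-q;q)_{2k}$ together with $q^{k(2k+1)} \cdot q^{2k+1} = q^{(k+1)(2k+1)}$, each such pair collapses:
\begin{align*}
\frac{q^{k(2k+1)}}{(-q;q)_{2k}} - \frac{q^{(k+1)(2k+1)}}{(-q;q)_{2k+1}} \;=\; \frac{q^{k(2k+1)}(1+q^{2k+1}) - q^{(k+1)(2k+1)}}{(-q;q)_{2k+1}} \;=\; \frac{q^{k(2k+1)}}{(-q;q)_{2k+1}},
\end{align*}
and summing over $k \geq 0$ delivers the claimed identity.

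There is no genuine obstacle here: the combinatorial step is forced once one observes that prescribing the minimal excludant of a distinct-part partition determines its initial segment, and the resulting $q$-series identity reduces to the one-line term-pairing calculation above. The appearance of the generating function of Uncu from \cite{uncuspin} on the nose is a pleasant byproduct, yielding the nonnegativity conclusion $a_d(n) = U(n) \geq 0$ for free.
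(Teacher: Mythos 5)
Your proposal is correct and follows essentially the same route as the paper: both classify distinct-part partitions by the value of the (odd) minimal excludant to get $(-q;q)_{\infty}\sum_{k\ge 0} q^{k(2k+1)}/(-q;q)_{2k+1}$, and both establish the equality with $\sum_{n\ge 0}(-1)^n q^{\binom{n+1}{2}}/(-q;q)_n$ by the same pairing of consecutive terms via $(-q;q)_{2k+1}=(1+q^{2k+1})(-q;q)_{2k}$, merely run in the opposite direction (you collapse the alternating sum, the paper expands the single sum). No gaps.
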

An immediate consequence of this result is
\begin{corollary}\label{gen_uncu}
For any $n \geq 0$,  we have 
\begin{align*}
  U(n) > 0 \quad \textrm{except for}\,\, n=1.
\end{align*}
\end{corollary}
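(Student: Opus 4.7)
The plan is to deduce the corollary almost immediately from Theorem \ref{a_d(n)}, which identifies $U(n)$ with the combinatorial quantity $a_d(n)$, the number of partitions of $n$ into distinct parts whose minimal excludant is odd. Once this identification is in hand, the problem reduces to exhibiting, for each $n \neq 1$, at least one partition of $n$ into distinct parts with odd minimal excludant.

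First I would dispose of the small cases. For $n = 0$, the empty partition is the unique partition into distinct parts, and by the usual convention $\textup{mex}(\emptyset) = 1$, which is odd; so $a_d(0) = 1$. For $n = 1$, the only partition into distinct parts is $\pi = (1)$, whose minimal excludant is $2$ (even); this explains the exceptional value $a_d(1) = 0$ and hence $U(1) = 0$.

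For every $n \geq 2$, the single-part partition $\pi = (n)$ lies in $\mathcal{D}(n)$, and since $1$ is not a part of $\pi$ we have $\textup{mex}(\pi) = 1$, which is odd. Thus $\pi$ contributes to $a_d(n)$, giving $a_d(n) \geq 1$. Combined with Theorem \ref{a_d(n)}, this yields $U(n) = a_d(n) > 0$ for all $n \neq 1$, which is the statement of the corollary.

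There is really no main obstacle here: all of the work has been invested in proving Theorem \ref{a_d(n)}, which converts a purely analytic non-negativity statement of Uncu into a combinatorial positivity statement about partitions. Once the combinatorial interpretation is available, the strict positivity (as opposed to mere non-negativity) becomes transparent from the trivial witness $\pi = (n)$, and the unique exceptional case $n=1$ emerges naturally because $(1)$ is the only partition into distinct parts for which $1$ itself appears as a part.
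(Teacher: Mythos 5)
Your proposal is correct and follows essentially the same route as the paper: identify $U(n)$ with $a_d(n)$ via Theorem \ref{a_d(n)} and then exhibit the single-part partition $(n)$, whose minimal excludant is $1$, as a witness for every $n\geq 2$. Your treatment is in fact marginally more complete than the paper's, since you also check the case $n=0$ (via the convention $\textup{mex}(\emptyset)=1$) and explain why $n=1$ is the unique exception, whereas the paper only argues for $n>1$.
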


Andrews and Newman also defined $\textup{moex}(\pi)$ to be the smallest odd integer missing from $\pi$. This naturally led them to define the function $\sigma \textup{moex}(n)$, given by
\begin{equation*}
 \sigma \textup{moex}(n):= \sum_{\pi \in \mathcal{P}(n)} \textup{moex}(\pi).
\end{equation*}

We analogously define a quantity for partitions into distinct parts and study its generating function. Define
\begin{equation*}
\sigma_d \textup{moex}(n):= \sum_{\pi \in \mathcal{D}(n)} \textup{moex}(\pi).    
\end{equation*}

\begin{theorem}\label{Sigma_d_MOEX}
The generating function for $\sigma_d \textup{moex}(n)$ is
  \begin{align*}
   \sum_{n = 0}^{\infty} \sigma_d \textup{moex}(n)q^n &= (-q; q)_{\infty}\left(1+ 2\sum_{n = 1}^{\infty} \frac{q^{n^2}}{(-q; q^2)_{n}} \right) \\  
   &= (-q; q)_{\infty}\left(1 + 2\sum_{n=1}^{\infty} (-1)^{n-1} q^n (q^2; q^2)_{n-1}\right).
  \end{align*}
In other words,  
\begin{equation*}
 \sum_{n = 0}^{\infty} \sigma_d \textup{moex}(n)q^n = (-q; q)_{\infty}\left(1 + \sigma^*(-q)\right),
\end{equation*}
where \begin{equation*} \sigma^*(q):=2 \sum_{n = 1}^{\infty} \frac{(-1)^n q^{n^2}}{(q; q^2)_{n}}.
\end{equation*}
\end{theorem}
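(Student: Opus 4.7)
The plan is to decompose the sum defining $\sigma_d\textup{moex}(n)$ combinatorially according to the value of $\textup{moex}(\pi)$, and then reduce the resulting generating function to each of the three stated forms by standard $q$-series manipulations.

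First, for a partition $\pi\in\mathcal{D}$, $\textup{moex}(\pi)=2k+1$ exactly when $\pi$ contains each of the odd parts $1,3,\ldots,2k-1$ but not $2k+1$; the remaining odd parts drawn from $\{2k+3,2k+5,\ldots\}$ and all even parts are then arbitrary (distinct). Collecting weights gives
\begin{align*}
\sum_{n\ge 0}\sigma_d\textup{moex}(n)q^n
&=(-q^2;q^2)_\infty\sum_{k\ge 0}(2k+1)q^{k^2}(-q^{2k+3};q^2)_\infty\\
&=(-q;q)_\infty\sum_{k\ge 0}\frac{(2k+1)q^{k^2}}{(-q;q^2)_{k+1}},
\end{align*}
after using $(-q;q)_\infty=(-q;q^2)_\infty(-q^2;q^2)_\infty$ and $(-q^{2k+3};q^2)_\infty=(-q;q^2)_\infty/(-q;q^2)_{k+1}$. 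Setting $a_k:=q^{k^2}/(-q;q^2)_k$, a direct check gives $q^{k^2}/(-q;q^2)_{k+1}=a_k-a_{k+1}$, and an Abel-summation telescoping yields
\begin{equation*}
\sum_{k\ge 0}(2k+1)(a_k-a_{k+1})=a_0+2\sum_{k\ge 1}a_k=1+2\sum_{k\ge 1}\frac{q^{k^2}}{(-q;q^2)_k},
\end{equation*}
proving the first form.

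To pass from the first form to the second, it remains to establish the $q$-series identity
\begin{equation*}
\sum_{n\ge 1}\frac{q^{n^2}}{(-q;q^2)_n}=\sum_{n\ge 1}(-1)^{n-1}q^n(q^2;q^2)_{n-1}.
\end{equation*}
To prove it I would introduce $C(z):=\sum_{n\ge 0}(-z)^n(q^2;q^2)_n$, so that the right-hand side is precisely $qC(q)$. Using $(q^2;q^2)_n=(1-q^{2n})(q^2;q^2)_{n-1}$ and a short reindexing, one derives the functional equation $(1+z)C(z)=1+zq^2C(zq^2)$, and iterating this equation produces the expansion
\begin{equation*}
C(z)=\sum_{k\ge 0}\frac{z^k q^{k(k+1)}}{(-z;q^2)_{k+1}}.
\end{equation*}
Substituting $z=q$, multiplying through by $q$, and reindexing $n=k+1$ recovers the left-hand side of the identity.

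Finally, the reformulation in terms of $\sigma^*(-q)$ is immediate: because $n(n+1)$ is always even, $\sigma^*(-q)=2\sum_{n\ge 1}q^{n^2}/(-q;q^2)_n$, matching the first form. The main obstacle in the plan is the third step, namely identifying the functional equation for $C(z)$ and justifying the iteration in the formal-power-series sense (so that the tail term $z^{k+1}q^{(k+1)(k+2)}C(zq^{2(k+1)})$ vanishes in the $q$-adic topology); once this is in hand, all remaining steps are routine.
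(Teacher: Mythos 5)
Your proof is correct, and its core is the same as the paper's: the same combinatorial decomposition by $\textup{moex}(\pi)=2k+1$ (forcing the odd parts $1,3,\dots,2k-1$, excluding $2k+1$, and leaving the even parts and the odd parts $\geq 2k+3$ free) leading to $(-q;q)_\infty\sum_{k\geq 0}(2k+1)q^{k^2}/(-q;q^2)_{k+1}$, followed by the same telescoping — the paper writes $1/(1+q^{2k+1})=1-q^{2k+1}/(1+q^{2k+1})$ and shifts indices, which is exactly your Abel summation with $a_k=q^{k^2}/(-q;q^2)_k$. (The paper reaches the weighted sum by differentiating a two-variable generating function in $z$ at $z=1$, but that is only a packaging difference.) The one genuine divergence is the second displayed form: the paper simply invokes Chern's Proposition 2.1 with $x=1$, $y=-1$ to get $\sum_{n\geq 1}q^{n^2}/(-q;q^2)_n=\sum_{n\geq 1}(-1)^{n-1}q^n(q^2;q^2)_{n-1}$, whereas you prove this identity from scratch via the functional equation $(1+z)C(z)=1+zq^2C(zq^2)$ for $C(z)=\sum_{n\geq 0}(-z)^n(q^2;q^2)_n$ and its iteration. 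Your functional equation and the resulting expansion $C(z)=\sum_{k\geq 0}z^kq^{k(k+1)}/(-z;q^2)_{k+1}$ check out, and the tail $z^{k+1}q^{(k+1)(k+2)}C(zq^{2(k+1)})$ indeed vanishes formally since its $q$-order grows quadratically in $k$ (after setting $z=q$). So your write-up is self-contained where the paper defers to a reference; the cost is the extra lemma, the benefit is independence from Chern's result.
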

In literature,  the $q$-series 
 $\sigma(q)$ and $\sigma^{*}(q)$ are found to appear simultaneously at many
places. Andrews,  Dyson and Hickerson \cite{adh} proved that the coefficients of these two $q$-series are very small and related to the arithmetic of the quadratic real field $\mathbb{Q}(\sqrt{6})$. 
%
%
%

Next,  similar to the definition \eqref{sigma_d defn},  we define $\sigma_d\textrm{maex}(n)$ as 
\begin{align*}
\sigma_d\textrm{maex}(n):= \sum_{\pi \in \mathcal{D}(n)} \textrm{maex}(n).
\end{align*}
The next result provides us a generating function identity for $\sigma_d\textrm{maex}(n)$.
\begin{theorem}\label{MAEX_D(N)}
We have
\begin{align*}
\sum_{n=0}^{\infty}\sigma_d \textup{maex}(n)q^n= \sum_{k=1}^{\infty}k(-q;q)_{k-1}\sum_{m=1}^{\infty}q^{\frac{m(m+1)}{2}+km}.
\end{align*}
\end{theorem}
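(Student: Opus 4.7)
The plan is to convert the statement into a combinatorial sum over partitions into distinct parts, classified according to the value of their maximal excludant, and then recognise each piece of the right-hand side as the generating function of one ingredient in this classification.

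First I would examine the shape of a generic $\pi \in \mathcal{D}(n)$ with $\textup{maex}(\pi) = k$. Let $L$ denote the largest part of $\pi$. By definition, $k$ is the largest integer in $\{0, 1, \ldots, L-1\}$ that fails to appear as a part of $\pi$; hence every element of the ``top block'' $k+1, k+2, \ldots, L$ must occur as a part of $\pi$ (else it would itself be an excludant larger than $k$), while $k$ itself does not. Writing $L = k + m$ with $m \geq 1$, this block contributes $q^{(k+1)+(k+2)+\cdots+(k+m)} = q^{km + m(m+1)/2}$ to the monomial associated with $\pi$. The remaining parts of $\pi$ form an arbitrary subset of $\{1, 2, \ldots, k-1\}$, automatically distinct and automatically disjoint from the top block. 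Conversely, any triple $(k, m, S)$ with $k \geq 1$, $m \geq 1$, $S \subseteq \{1, \ldots, k-1\}$ recovers such a $\pi$ via $\pi := S \cup \{k+1, \ldots, k+m\}$, so this classification gives a bijection between partitions in $\mathcal{D}$ with $\textup{maex}(\pi) \geq 1$ and such triples.

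Weighting each $\pi$ by $\textup{maex}(\pi) = k$ and summing over this parametrisation yields
\begin{align*}
\sum_{n=0}^{\infty}\sigma_d\textup{maex}(n)\,q^n
 = \sum_{k=1}^{\infty} k \sum_{m=1}^{\infty} q^{km + m(m+1)/2} \prod_{j=1}^{k-1}(1+q^j),
\end{align*}
since the generating function for subsets of $\{1, \ldots, k-1\}$ (equivalently, for partitions into distinct parts drawn from this set) is exactly $\prod_{j=1}^{k-1}(1+q^j) = (-q;q)_{k-1}$. Rearranging the factors gives precisely the identity in the theorem.

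The argument is essentially bookkeeping, so I do not foresee a genuine obstacle. Two small boundary points merit a quick check: the case $k = 0$ (partitions of the form $\{1, 2, \ldots, m\}$, which have $\textup{maex} = 0$) contributes nothing because the weight $k$ kills it, which is why the outer index starts at $k = 1$; and at $k = 1$ one uses $(-q;q)_0 = 1$ to encode that no parts lie beneath the top block. Both conventions align cleanly with the formula as stated.
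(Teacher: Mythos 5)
Your proposal is correct and follows essentially the same route as the paper: both decompose a distinct-parts partition with $\textup{maex}(\pi)=k$ into an arbitrary subset of $\{1,\dots,k-1\}$ (giving $(-q;q)_{k-1}$) together with the forced consecutive block $k+1,\dots,k+m$ (giving $q^{km+m(m+1)/2}$). The only cosmetic difference is that the paper packages the weight $k$ via a two-variable generating function $D(z,q)$ differentiated at $z=1$, whereas you insert the weight directly.
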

One can easily observe that Theorem \ref{MAEX_D(N)} is an analogue of Chern's identity \cite[(2)]{chernmaxexcludnt}. Before going on to the proofs of our main results, we establish an auxiliary result in the next section.  

\section{Preliminaries}
For the proof of the asymptotic formula for $\sigma_d \textup{mex}(n)$, namely, Theorem \ref{asym_sigma_d_mex}, we derive a monotonic property of $\sigma_d \textup{mex}(n)$, which is itself of independent interest. 

\begin{proposition}\label{monotonic sigma d}
For $n \geq 7$, we have that 
$$
\sigma_d \textup{mex}(n+1) > \sigma_d \textup{mex}(n).
$$
\end{proposition}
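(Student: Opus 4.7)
The plan is to use Theorem~\ref{sigma_d_mex} to obtain a decomposition of $\sigma_d\textup{mex}(n)$ that is amenable to a term-by-term monotonicity argument. Substituting $\sigma(q)=\sum_{k\geq 0} q^{k(k+1)/2}/(-q;q)_k$ into Theorem~\ref{sigma_d_mex} and using $(-q;q)_\infty=(-q;q)_k(-q^{k+1};q)_\infty$, one finds
\begin{equation*}
\sum_{n\geq 0}\sigma_d\textup{mex}(n)\,q^n \;=\; \sum_{m\geq 1} q^{\binom{m}{2}}\prod_{j\geq m}(1+q^j).
\end{equation*}
Writing $p_m(N)$ for the number of partitions of $N$ into distinct parts all $\geq m$ (with the convention $p_m(0)=1$ and $p_m(N)=0$ for $N<0$), extracting coefficients of $q^n$ yields
\begin{equation*}
\sigma_d\textup{mex}(n) \;=\; \sum_{m\geq 1} p_m\!\left(n-\tbinom{m}{2}\right).
\end{equation*}
Combinatorially, this is exactly the stratification of the pairs $(\pi,j)$ with $\pi\in\mathcal{D}(n)$ and $1\leq j\leq\textup{mex}(\pi)$ by the mex-threshold $m=j$: the condition $\textup{mex}(\pi)\geq m$ forces $\{1,\ldots,m-1\}\subseteq\pi$, which costs $\binom{m}{2}$ and leaves an unconstrained distinct partition of $n-\binom{m}{2}$ into parts $\geq m$.

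The core step is the weak monotonicity $p_m(N+1)\geq p_m(N)$ for every $m\geq 1$ and every $N\geq 1$. This I would prove by the injection that increments the largest part of any \emph{nonempty} distinct partition of $N$ into parts $\geq m$, which is clearly reversible. The only exceptional index is $N=0$, where $p_m(0)=1$ while $p_m(1)=1$ for $m=1$ and $p_m(1)=0$ for $m\geq 2$; hence $p_m(1)-p_m(0)=-1$ precisely when $m\geq 2$. Telescoping the decomposition,
\begin{equation*}
\sigma_d\textup{mex}(n+1)-\sigma_d\textup{mex}(n) \;=\; \sum_{m\geq 1}\left[p_m\!\left(n+1-\tbinom{m}{2}\right)-p_m\!\left(n-\tbinom{m}{2}\right)\right],
\end{equation*}
and every summand is non-negative except that, when $n=\binom{m_0}{2}$ is a triangular number (for a unique $m_0\geq 2$), the summand at $m=m_0$ contributes $-1$.

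To absorb this potential defect I would invoke the classical strict monotonicity of the distinct-partition function $d(n)=p_1(n)$, namely $d(n+1)-d(n)\geq 1$ for $n\geq 4$ and $d(n+1)-d(n)\geq 2$ for $n\geq 8$. Both bounds follow from the same increment-max injection by identifying its missed elements as exactly the distinct partitions of $n+1$ whose two largest parts are consecutive, and exhibiting one (resp.\ two) such partitions, e.g.\ $\{k,k+1\}$ when $n+1=2k+1$ is odd and $\{1,k,k+1\}$ when $n+1=2k+2$ is even with $k\geq 2$. Combining: for $n\geq 7$ non-triangular, every summand is $\geq 0$ and the $m=1$ term contributes $\geq 1$; for triangular $n=\binom{m_0}{2}\geq 10$ (forcing $m_0\geq 5$), the $-1$ defect at $m=m_0$ is dominated by the surplus $d(n+1)-d(n)\geq 2$. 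The main obstacle is this triangular boundary case: the smallest instance $n=10$ is tight, with the $m=1$ surplus equal to $2$ and the $m=5$ defect equal to $-1$, producing the minimal positive increase of exactly $1$; establishing the sharpened lower bound $d(n+1)-d(n)\geq 2$ for all $n\geq 8$ cleanly is therefore the crux.
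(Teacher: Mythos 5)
Your argument is correct, and at its core it is the paper's proof in a different packaging: both hinge on the injection that adds $1$ to the largest part of a distinct partition. The paper applies this map $f$ directly on $\mathcal{D}(n)$, checks that it preserves $\textup{mex}$ except when $n$ is triangular and $\pi$ is the staircase $(k,k-1,\dots,1)$ (where $\textup{mex}$ drops by exactly $1$), and then exhibits one or two explicit partitions of $n+1$ whose two largest parts are consecutive, hence outside the image of $f$, to force strictness. Your stratified version---writing $\sigma_d\textup{mex}(n)=\sum_{m\ge 1}p_m(n-\binom{m}{2})$, which is precisely the identity $\sigma_d\textup{mex}(n)=\sum_i \mathcal{D}_i(n)$ appearing in the paper's second proof of Theorem \ref{sigma_d_mex}, and applying the same injection within each stratum---localizes the defect to the single term $p_{m_0}(1)-p_{m_0}(0)=-1$ at triangular $n=\binom{m_0}{2}$, which is exactly the paper's staircase loss. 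What your route buys is a cleaner reduction of strictness to a classical fact about $d(n)=p_1(n)$; what it costs is that you must actually prove $d(n+1)-d(n)\ge 2$ at triangular $n\ge 10$, which you correctly identify as the crux but leave unfinished. This is easily closed, and the witnesses needed are essentially the paper's $\nu_1,\nu_2,\nu_3$: the partitions missed by the increment-max injection are exactly those with at least two parts whose top two parts are consecutive, and for $n+1=2k+1$ you may take $\{k+1,k\}$ and $\{k,k-1,2\}$, while for $n+1=2k+2$ you may take $\{k+1,k,1\}$ and $\{k,k-1,3\}$; since triangular $n\ge 10$ forces $k\ge 5$ in either parity, both pairs consist of genuine distinct-part partitions and are distinct from one another. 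With that sentence added, your proof is complete and correct.
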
 
\begin{proof}
We consider two cases depending on whether $n$ is a triangular number or not.

\textbf{Case 1:} Suppose $n \geq 7$ is not a triangular number, that is, $n \neq 1+2+ \cdots + k$ for any positive integer $k$. 
We define a map $f$ from $\mathcal{D}(n)$ to $\mathcal{D}(n+1)$ as follows:
Consider a distinct parts partition $\pi$ of $n$. It will be of the form
\begin{equation}\label{pi in Dn}
\pi = (a_1, a_2, \dots, a_k) \quad \text{with} \ a_1 > a_2 > \cdots > a_k \geq 1, \ a_1 + a_2 + \cdots + a_k = n.
\end{equation} 
Then let $f(\pi) := \pi^{'} = (a_1 + 1, a_2, \dots, a_k)$. Clearly, this is a distinct parts partition of $n+1$. Also see that $(a_1 + 1) - a_2 \geq 2$, i.e., a partition of the form $f(\pi)$ has a gap of at least \textit{two} between its largest part and the next largest part. Moreover, $f$ preserves minimal excludants, that is $\textup{mex}(\pi) = \textup{mex}(\pi^{'})$. This is because as $n$ is not a triangular number, $\pi = (a_1, \dots, a_k)$ cannot be the partition $(k, k-1, \dots, 1)$ and consequently, there exists an $i, \ 1 \leq i \leq k$ such that $a_i - a_{i+1} > 1$ with the interpretation that $a_{k+1} = 0$. Choosing $i_0$ to be the largest integer $i$ for which $a_i - a_{i+1} > 1$ implies that $\textup{mex}(\pi) = \textup{mex}(\pi^{'}) = a_{i_0 + 1} + 1$. 

For example, if $n = 9$, then the partition $\pi_1 = (4, 3, 2)$ satisfies $k=i_0=3$ with $\textup{mex}(\pi_1) = \textup{mex}(f(\pi_1)) = 1$ and the partition $\pi_2 = (8, 1)$ satisfies $k=2, i_0 = 1$ with $\textup{mex}(\pi_2) = \textup{mex}(f(\pi_2)) = 2$. 
Thus, we have matched up the minimal excludants of partitions in $\mathcal{D}(n)$ with those of a certain collection of partitions in $\mathcal{D}(n+1)$ and hence $\sigma_d \textup{mex}(n) \leq \sigma_d \textup{mex}(n+1)$. We next exhibit a partition in $\mathcal{D}(n+1)$ that does not lie in the image of $f$, thus proving that the inequality  $\sigma_d \textup{mex}(n+1) > \sigma_d \textup{mex}(n)$ holds. The idea is to get hold of a partition $\pi$ in which both $\ell(\pi)$ and $\ell(\pi) - 1$ occur as parts so that it cannot be in the image of $f$.

\textbf{If $n$ is odd:} Consider the partition $\lambda_1 = \left(\frac{n+1}{2}, \frac{n-1}{2}, 1\right)$ of $n+1$. Note that $\frac{n+1}{2} - \frac{n-1}{2} = 1$ and that $\lambda_1$ has distinct parts as $(n-1)/2 > 1$.

\textbf{If $n$ is even:} Look at the partition $\lambda_2 = \left(\frac{n}{2} + 1, \frac{n}{2}\right)$ of $n+1$, which is again in $\mathcal{D}(n+1) \setminus f(\mathcal{D}(n))$. \\

\textbf{Case 2:} Assume that $n \geq 7$ is a triangular number, so that $n = k+ (k-1) + \cdots + 1$ for a unique positive integer $k$. As in Case $1$, for any $\pi$ in $\mathcal{D}(n)$ other than the partition $\mu = (k, k-1, \dots, 1)$, the partition $f(\pi)$ lying in $\mathcal{D}(n+1)$ will have the same minimal excludant as $\pi$. But for $\mu$, whose minimal excludant is $k+1$, we see that $\textup{mex}(f(\mu)) = k$. This means that $\sum_{\pi \in \mathcal{D}(n)} \textup{mex}(\pi) = 1 + \sum_{\pi \in f(\mathcal{D}(n))} \textup{mex}(\pi)$. In other words, to prove the required inequality we have to show that the contribution of the partitions from $\mathcal{D}(n+1) \setminus f(\mathcal{D}(n))$ to the sum of minimal excludants is at least $2$. We once again proceed according to the parity of $n$.

\textbf{If $n$ is odd:} Consider the partition $\nu_1 = \left(\frac{n+1}{2}, \frac{n-1}{2}, 1\right)$ of $n+1$. Observe that $(n-1)/2 > 2$ and so $\nu_1 \in \mathcal{D}(n+1) \setminus f(\mathcal{D}(n))$ with its minimal excludant being $2$.

\textbf{If $n$ is even:} In this case, we look at two partitions given by $\nu_2 = (\frac{n}{2} + 1, \frac{n}{2})$ and $\nu_3 = (\frac{n}{2}, \frac{n}{2} - 1, 2)$. Clearly, $\nu_2 \in \mathcal{D}(n+1) \setminus f(\mathcal{D}(n))$ with minimal excludant $1$. Again, note that $\nu_3$ has distinct parts since $\frac{n}{2} - 1 > 2$ and hence $\textup{mex}(\nu_3) = 1$.

With this, we complete the proof of the proposition.
\end{proof}
In the upcoming section, we provide the proofs of our results.

\section{Proofs of the main results}

\subsection{Proof of Theorem \rm{\ref{sigma_d_mex}}}

\subsubsection{First proof of Theorem \rm{\ref{sigma_d_mex}}}
Let $p_d^{mex}(m, n)$ be the number of partitions of $n$ into distinct parts whose minimal excludant is $m$.
Then, we have 
\begin{align}
 \sum_{n=0}^{\infty}\sum_{m=1}^{\infty} p_d^{mex}(m, n)z^m q^n &= \sum_{m=1}^{\infty} z^m q^1 \cdot q^2 \cdots q^{m-1} \prod_{k=m+1}^{\infty}(1+ q^k)
\nonumber \\
 &=  \sum_{m=1}^{\infty} z^m q^{\binom m2}\prod_{k=m+1}^{\infty}(1+ q^k)
 \nonumber \\
 &= (-q; q)_{\infty} \sum_{m=1}^{\infty} \frac{z^m q^{\binom m2}}{(-q; q)_m}. \label{prediffwrtz_sigma_dmex}
\end{align}
Differentiating both sides of \eqref{prediffwrtz_sigma_dmex} with respect to $z$ and putting $z=1$, we get
\begin{equation*}
 \sum_{n=0}^{\infty}\left(\sum_{m=1}^{\infty} m p_d^{mex}(m, n)\right) q^n = (-q; q)_{\infty} \sum_{m=1}^{\infty} \frac{m q^{\binom m2}}{(-q; q)_m}.
\end{equation*}
But $ \displaystyle\sum_{m=1}^{\infty}m p_d^{mex}(m, n) = \sigma_d \textup{mex}(n),$ the sum of minimal excludants in all the partitions of $n$ into distinct parts.
Thus,
\begin{equation}\label{sigma_d mexgfn}
  \sum_{n=0}^{\infty} \sigma_d \textup{mex}(n)  q^n = (-q; q)_{\infty} \sum_{m=1}^{\infty} \frac{m q^{\binom m2}}{(-q; q)_m}.
\end{equation}


We now show that the sum on the right hand side of \eqref{sigma_d mexgfn} is nothing but Ramanujan's series $\sigma(q)$. Start with
{\allowdisplaybreaks
 \begin{align*}
 \sum_{n=0}^{\infty} \frac{q^{n(n+1)/2}}{(-q; q)_n} &= \sum_{n=0}^{\infty}\frac{(n+1) - n}{(-q; q)_n} q^{n(n+1)/2} \\
 &= \sum_{n=0}^{\infty} \frac{(n+1) q^{n(n+1)/2}}{(-q; q)_n} - \sum_{n=0}^{\infty} \frac{n q^{n(n+1)/2}}{(-q; q)_n} \\
 &= \sum_{n=1}^{\infty} \frac{n q^{n(n-1)/2}}{(-q; q)_{n-1}} - \sum_{n=1}^{\infty} \frac{n q^{n(n+1)/2}}{(-q; q)_n} \\
 &= \sum_{n=1}^{\infty} \frac{n q^{n(n-1)/2}}{(-q; q)_{n-1}}\left( 1 - \frac{q^n}{1+q^n}\right)\\
 &= \sum_{n=1}^{\infty} \frac{n q^{n(n-1)/2}}{(-q; q)_{n}}.
\end{align*}}

Therefore, from \eqref{sigma_d mexgfn}, we deduce that
\begin{equation}\label{sigma_d_sigmaq}
 \sum_{n=0}^{\infty} \sigma_d \textup{mex}(n)  q^n = (-q; q)_{\infty} \sigma(q).
\end{equation}
\hfill{\qed}\\
We now give an alternate proof of \eqref{sigma_d_sigmaq}, on the lines of Andrews and Newman's second proof of the generating function for
$\sigma \textup{mex}(n)$. (see \cite[p. $251$]{andrewsnewmanI})

\subsubsection{Second proof of Theorem \ref{sigma_d_mex}}

Let $\mathcal{D}_i (n)$ denote the number of partitions of $n$ into distinct parts for which 
$\textup{mex}(\pi) > i$. Then we claim that 
\begin{equation}\label{claim}
\mathcal{D}_i (n) = p_d \left(n - \frac{i(i+1)}{2}, \ i  \right), 
\end{equation}
where $p_d (m, \ i)$ denotes the number of partitions of $m$ into distinct parts with smallest part 
greater than $i$. To see this, start with a distinct parts partition $\pi$ of $n$ with 
$\textup{mex}(\pi) > i$. By the definition of minimal excludant, the integers $1$ through $i$ must all occur as parts in $\pi$. Moreover, since $\pi$ is a distinct parts partition, each of the numbers
$1$ to $i$ appears exactly once in $\pi$. Subtract the quantity $1+2+\cdots + i$ from $\pi$. This
gives a distinct parts partition $\pi'$ of $n - (1 + 2+ \cdots + i)$ (since we began with a 
distinct parts partition $\pi$, removing some parts from it doesn't affect its distinct nature).
Now, since $\pi$ has only one copy of each of $1$ to $i$, $\pi'$ will not have any parts less than or equal to
$i$. Therefore $\pi'$ is a distinct parts partition of $n - \frac{i(i+1)}{2}$ with smallest part 
greater than $i$.

Conversely, starting with a distinct parts partition $\lambda$ of $n - \frac{i(i+1)}{2}$ with $s(\pi) > i$,
we add the quantity $1 + 2 + \cdots + i$ to $\lambda$ to get a distinct parts partition $\lambda'$ 
(since $\lambda$ had no parts less than or equal to $i$) with the integers $1$ to 
$i$ all occurring as parts. This means that $\textup{mex}(\lambda') > i$. Hence, this bijection
proves the claim in \eqref{claim}.

From the definition of $\mathcal{D}_i (n)$, $\sigma_d \textup{mex}(n)$ can be expressed as
\begin{equation}\label{sigma_d_D_i}
\sigma_d \textup{mex}(n) = \sum_{i = 0}^{\infty} \mathcal{D}_i (n), 
\end{equation}
since each distinct parts partition $\pi$ with $\textup{mex}(\pi) = i$ is counted $i$ times on the right hand side of \eqref{sigma_d_D_i}, 
once in each of $\mathcal{D}_0 (n), \mathcal{D}_1 (n), \dots, \mathcal{D}_{i-1} (n)$. On the left hand side of \eqref{sigma_d_D_i},
we add together the minimal excludants over all the distinct parts partitions, thus each distinct parts partition $\pi$ contributes a weight $\textup{mex}(\pi)$ to it. Thus, on both sides of equation 
\eqref{sigma_d_D_i}, each distinct parts partition contributes the same number and hence the identity holds.

Now, the generating function of distinct parts partitions with $s(\pi) > i$ is simply
\begin{equation*}
\sum_{n=0}^{\infty} p_d (n, \ i) q^n 
= (-q^{i+1}; q)_{\infty}.
\end{equation*}

Therefore, $\mathcal{D}_i (n)$, which is the number of distinct parts partitions of $n - 
\frac{i(i+1)}{2}$ with smallest part greater than $i$, will be the coefficient of 
$q^{n - \frac{i(i+1)}{2}}$ in $(-q^{i+1}; q)_{\infty}$. Equivalently, this is the coefficient
of $q^n$ in $q^{\frac{i(i+1)}{2}} (-q^{i+1}; q)_{\infty}$.

Thus,
\begin{equation*}
\sum_{n=0}^{\infty}\mathcal{D}_i (n) q^n = q^{\frac{i(i+1)}{2}} (-q^{i+1}; q)_{\infty} = 
(-q; q)_{\infty} \frac{q^{\frac{i(i+1)}{2}}}{(-q; q)_i}.
\end{equation*}
We are ready to obtain the generating function of $\sigma_d \textup{mex}(n)$. Starting with
\eqref{sigma_d_D_i}, we get
\begin{align*}
\sum_{n=0}^{\infty} \sigma_d \textup{mex}(n) q^n = \sum_{n=0}^{\infty} q^n 
\sum_{i=0}^{\infty} \mathcal{D}_i (n) &= \sum_{i=0}^{\infty}\sum_{n=0}^{\infty} 
\mathcal{D}_i (n) 
q^n \\
&= \sum_{i=0}^{\infty} (-q; q)_{\infty} \frac{q^{\frac{i(i+1)}{2}}}{(-q; q)_i} \\
&= (-q; q)_{\infty} \sum_{i=0}^{\infty} \frac{q^{\frac{i(i+1)}{2}}}{(-q; q)_i} = (-q; q)_{\infty}
\sigma(q).
\end{align*}
\hfill{\qed}

\subsection{Proof of Theorem {\rm \ref{asym_sigma_d_mex}}}

We first state an important asymptotic result for coefficients of a power series, due to Ingham \cite{Ingham}.

\begin{proposition}\label{Ingham}
Let $ A(q) = \sum_{n=0}^\infty a(n) q^n$ be a power series with radius of convergence $1$. Assume that $ \{ a(n) \}$ is a weakly increasing sequence of non-negative real numbers.  If there are constants $\alpha,  \beta \in \mathbb{R}$,  and $C >0$ such that 
\begin{align*}
A(e^{-t}) \sim \alpha \, t^{\beta} \exp\left( \frac{C}{t} \right),  \,\, \text{ as} \,\, t \rightarrow 0^{+},
\end{align*}
then we have
\begin{align}\label{Ingham_asymptotic}
a(n) \sim \frac{\alpha}{2 \sqrt{\pi}} \frac{C^{\frac{2\beta +1}{4} }}{n^{\frac{2\beta +3}{4}}}  \exp\left( 2 \sqrt{C n} \right),  \,\, \text{as} \,\,  n \rightarrow \infty.  
\end{align}
\end{proposition}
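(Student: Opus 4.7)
The plan is to prove this classical Tauberian theorem of Ingham by combining a saddle-point heuristic (which forecasts the exact shape of the asymptotic) with the monotonicity hypothesis on $\{a(n)\}$ (which supplies the Tauberian input needed to convert averaged behaviour into pointwise behaviour). The heuristic behind \eqref{Ingham_asymptotic} comes from Cauchy's formula $a(n) = \frac{1}{2\pi}\int_{-\pi}^{\pi} A(e^{-t+i\theta})\, e^{(t-i\theta)n}\,d\theta$ applied at the saddle point. Writing the hypothesis as $\log A(e^{-t}) = C/t + \beta \log t + \log\alpha + o(1)$ and solving $\partial_t\left[\log A(e^{-t}) + nt\right] = 0$ gives the saddle $t_n := \sqrt{C/n}$ to leading order; at this value $A(e^{-t_n})\, e^{n t_n} \sim \alpha (C/n)^{\beta/2} \exp\!\left(2\sqrt{Cn}\right)$, and the Gaussian width arising from the second derivative $2 n^{3/2}/\sqrt{C}$ in the $\theta$-direction produces the prefactor $\sqrt{\pi}\, C^{1/4} n^{-3/4}$. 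Multiplying these ingredients reproduces \eqref{Ingham_asymptotic} exactly, including the constant $\alpha/(2\sqrt{\pi})$ and the exponents $(2\beta+1)/4$ on $C$ and $(2\beta+3)/4$ on $n$.

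The rigorous proof must avoid direct use of this complex saddle-point computation, because the hypothesis only controls $A(e^{-t})$ along the positive real axis. Following Ingham's original strategy, one instead works entirely on the real line through smoothed partial sums of the form
\begin{equation*}
S_\phi(n) \;:=\; \sum_{k \geq 0} a(k)\, \phi\!\left(\tfrac{k-n}{\Delta_n}\right) e^{-(k-n)\, t_n},
\end{equation*}
with window width $\Delta_n := C^{1/4} n^{-3/4}$, and relates $S_\phi(n)$ to $A(e^{-t_n})\, e^{n t_n}$ in two steps. First, multiplying the Abel identity $\sum_k a(k) e^{-k t_n} = A(e^{-t_n})$ by $e^{n t_n}$ and discarding terms with $|k-n| > M \Delta_n$ loses only an exponentially small fraction, because $a(k)\, e^{-k t_n}$ is Gaussian-concentrated near $k = n$; this concentration requires only non-negativity of $a(k)$ together with the real-line asymptotic of $A$, with no complex-analytic input. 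Second, monotonicity lets us bracket $a(k)$ inside the window by $a(n - M \Delta_n) \leq a(k) \leq a(n + M \Delta_n)$, so that choosing $\phi$ as a non-negative majorant and minorant of a suitable indicator around $n$ pinches $a(n)$ between two nearly-equal averaged quantities; letting $M \to 0$ at the correct coupled rate forces $a(n)$ to the value predicted by the heuristic.

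The main obstacle is the joint calibration of three scales — the saddle $t_n$, the window width $\Delta_n$, and the smoothing scale inside $\phi$ — so that the sandwich pinches $a(n)$ to the exact constant $\alpha\, C^{(2\beta+1)/4}/(2\sqrt{\pi})$ and the exact power $n^{-(2\beta+3)/4}$. This requires effective control of the error in $A(e^{-t}) \sim \alpha t^\beta e^{C/t}$, made uniform as $t$ decays at the rate $\sqrt{C/n}$. A clean way to package the technical estimates is through Beurling--Selberg extremal majorants/minorants with sharply peaked Laplace transforms. Since Proposition \ref{Ingham} is invoked only as a tool for Theorem \ref{asym_sigma_d_mex}, we would either carry out the derivation along these lines in an appendix or refer the reader to Ingham's original proof in \cite{Ingham}.
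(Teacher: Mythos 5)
The paper offers no proof of this proposition at all: it is stated as a quoted classical result and attributed to Ingham's 1941 paper \cite{Ingham}. So your closing option of simply referring the reader to \cite{Ingham} is precisely what the authors do, and your saddle-point heuristic does correctly reproduce the constant $\alpha/(2\sqrt{\pi})$ and the exponents $(2\beta+1)/4$ and $(2\beta+3)/4$ (the saddle $t_n=\sqrt{C/n}$, the value $2\sqrt{Cn}$, and the second derivative $2n^{3/2}/\sqrt{C}$ are all right). To that extent there is nothing to compare against.

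As a standalone proof, however, the sketch has concrete defects. First, the window width is inverted: you set $\Delta_n = C^{1/4}n^{-3/4}$, which tends to $0$, so for large $n$ the window $|k-n|\le M\Delta_n$ contains no integer other than $n$ and the sandwich is vacuous. The actual concentration scale of the tilted measure $a(k)e^{-kt_n}$ around $k=n$ is the reciprocal, $\sigma_n \asymp C^{-1/4}n^{3/4}$ (the quantity $C^{1/4}n^{-3/4}$ is the peak density $\asymp 1/\sigma_n$, which is why it appears as the prefactor in the answer). Second, even with the corrected width, the bracketing step fails as described: across a window of length $M\sigma_n$ the weights $e^{-(k-n)t_n}$ vary by a factor $e^{M t_n \sigma_n} = e^{M C^{1/4} n^{1/4}}$, which is enormous, so pulling $a(n\pm M\sigma_n)$ out of the weighted sum does not pinch $a(n)$ to the right constant. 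The genuine Tauberian content of Ingham's theorem is the passage from the Abel-type average $A(e^{-t})$ to the summatory function $\sum_{k\le n}a(k)$ (which needs more than nonnegativity plus concentration --- this is where the Karamata/Wiener-type machinery enters), followed by a differencing argument at the much smaller scale $o(\sqrt{n})$, where monotonicity is used and the weights are genuinely flat. That step is asserted rather than supplied in your plan, so the proposal is a correct heuristic plus a citation, not a proof; in its citation role it coincides with what the paper does.
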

\noindent
\begin{proof}[Theorem \ref{asym_sigma_d_mex}][]
Recall that the generating function for $\sigma_d{\rm mex}(n)$ is given by Theorem \ref{sigma_d_mex},  namely, 
\begin{align*}
\sum_{n=0}^{\infty}\sigma_d \textup{mex}(n)q^n=(-q;q)_{\infty}\sigma(q) := B(q). 
\end{align*}
In a famous work on quantum modular forms,  Zagier \cite[p.~7]{Zagier} pointed out that,  for $t\rightarrow 0^{+}$,
\begin{align}\label{Zagier_asymptotic}
\sigma\left(e^{-t} \right) =  2 -2 t + 5 t^2 - \frac{55}{3} t^3 + \frac{1073}{12} t^4 - \frac{32671}{60} t^5 + \cdots.  
\end{align}
Now, using the transformation formula for the Dedekind's eta-function,  one can show that,  for $t\rightarrow 0^{+}$,
\begin{align}\label{dedekind}
\frac{1}{ \left(e^{-t}; e^{-t}  \right)_\infty} \sim \sqrt{\frac{t}{2\pi}} \exp\left( \frac{\pi^2}{6 t} \right). 
\end{align}
Euler's identity suggests that 
\begin{align}\label{Euler}
( -q; q)_\infty = \frac{1}{(q; q^2)_\infty} = \frac{ \left( q^2; q^2 \right)_\infty }{(q; q)_\infty}. 
\end{align}
Therefore,  in view of \eqref{dedekind} and \eqref{Euler},  we can derive,  for $t \rightarrow 0^{+}$,  
\begin{align}\label{asym_distinct}
\left( - e^{-t}; e^{-t} \right)_\infty \sim \frac{1}{\sqrt{2}} \exp\left(  \frac{\pi^2}{12 t } \right).  
\end{align}
Finally,  combining \eqref{Zagier_asymptotic} and \eqref{asym_distinct},  we arrive at
\begin{align*}
B\left( e^{-t} \right) \sim \sqrt{2} \exp\left(  \frac{\pi^2}{12 t } \right), \quad {\rm as} \,\,  t \rightarrow 0^{+}.
\end{align*}
Again, by Proposition \ref{monotonic sigma d} we know that the sequence $\{ \sigma_d {\rm mex}(n) \}$, for $n \geq 7$, is an increasing sequence of positive integers. Now we are ready to invoke Proposition \ref{Ingham},  with $\alpha =\sqrt{2},   \beta=0$ and $C=  \displaystyle\frac{\pi^2}{12}$.  Substituting these constants in \eqref{Ingham_asymptotic},  we obtain 
\begin{align*}
\sigma_d{\rm mex}(n) \sim  \frac{ 1}{2 (3 n^3)^{1/4}} \exp\left(  \pi \sqrt{\frac{n}{3}} \right),  \quad \text{as}\,\, n \rightarrow \infty.
\end{align*}
This finishes the proof.  
\end{proof}

In the next subsection, we provide proofs of other results. 
\subsection{Proofs of other results}

\begin{proof}[ Theorem {\rm \ref{a_d(n)}}][]
Recall that $a_d(n)$ counts the number of distinct parts partitions of $n$ with an odd minimal excludant. So the least integer missing from such a 
partition can only be of the form $2n+1$ for some $n \geq 0$. And all the integers from 1 through $2n$ should occur exactly once and for the 
integers greater than $2n+1$, they may occur at most once. Putting this together, we may write
\begin{equation}\label{pd_omex}
\sum_{n=0}^{\infty}\sum_{m=1}^{\infty} p_d^{omex}(m, n)z^m q^n = \sum_{k=0}^{\infty} z^{2k+1} q^1 \cdot q^2 \cdots q^{2k} \prod_{\ell=2k+2}^{\infty}(1+ q^{\ell}),    
\end{equation}
where $p_d^{omex}(m, n)$ denotes the number of distinct parts partitions of $n$ with an odd minimal excludant $m$. Putting $z=1$ in \eqref{pd_omex}, we get 
\begin{equation}\label{a_dn_gfn}
\sum_{n=0}^{\infty} a_d(n) q^n = \sum_{k=0}^{\infty} q^{\binom {2k+1}{2}} \prod_{\ell=2k+2}^{\infty}(1 + q^{\ell})= (-q;q)_{\infty} \sum_{n= 0}^{\infty}\frac{q^{\binom {2n+1}{2}}}{(-q; q)_{2n+1}}.   
\end{equation}
Consider the rightmost sum in \eqref{a_dn_gfn}. Rewriting it, we get
{\allowdisplaybreaks \begin{align*}
\sum_{n=0}^{\infty}\frac{q^{\binom {2n+1}{2}}}{(-q; q)_{2n+1}} &= \sum_{n=0}^{\infty} \frac{q^{1+ \cdots + 2n}}{(1+q)\cdots(1+q^{2n+1})}\nonumber \\
&= \sum_{n=0}^{\infty} \frac{q^{1+ \cdots + 2n}}{(1+q)\cdots(1+q^{2n})}\left\{ 1 - \frac{q^{2n+1}}{1+q^{2n+1}}\right\} \nonumber\\
&= \sum_{n=0}^{\infty} \frac{q^{1+ \cdots + 2n}}{(1+q)\cdots(1+q^{2n})} - \sum_{n=0}^{\infty} \frac{q^{1+ \cdots + (2n+1)}}{(1+q)\cdots(1+q^{2n+1})} \nonumber\\
&= \sum_{n=0}^{\infty} \frac{q^{\binom {2n+1}{2}}}{(-q; q)_{2n}} - \sum_{n=0}^{\infty} \frac{q^{\binom {2n+2}{2}}}{(-q; q)_{2n+1}}\nonumber\\
&= \sum_{n=0}^{\infty} \frac{(-1)^n q^{\binom {n+1}{2}}}{(-q; q)_n}.
\end{align*} }
Putting this in \eqref{a_dn_gfn}, we see that
\begin{equation*}
 \sum_{n = 0}^{\infty} a_d(n) q^n = (-q; q)_{\infty} \sum_{n = 0}^{\infty} \frac{(-1)^n q^{\binom {n+1}{2}}}{(-q; q)_n}.
\end{equation*}
This completes the proof.
\end{proof}

\begin{proof}[Corollary  {\rm \ref{gen_uncu}}][]
We note that $a_d(n)$ is non-negative for all $n \geq 0$, since it counts certain kind of partitions. Moreover, for
$n>1$,  we always have a partition of $n$ into distinct parts with an odd minimal excludant, namely,  the partition $n$,  
where the minimal excludant is $1$.  So
$a_d(n) > 0$ for all $n > 1$,  and hence by \eqref{a_dfinalgfn}, we conclude that  $U(n)>0$ for all $n>1$.
\end{proof}
Uncu \cite[Theorem 3.2]{uncuspin}, remarks that the infinite series in \eqref{a_dfinalgfn} is a false theta function
studied by Rogers. Further, in the same paper, he gives a combinatorial explanation of the fact that the coefficients on the right hand side of
\eqref{a_dfinalgfn} are non-negative. But, by Corollary \ref{gen_uncu}, via our interpretation in terms of minimal excludant, 
we have shown that all but one of the coefficients, namely $a_d(1)$, are infact positive.

\begin{proof}[Theorem {\rm {\ref{Sigma_d_MOEX}}}][]
Let  $p_d^{moex}(m, n)$ denote the number of distinct parts partitions $\pi$ of $n$ with $\textup{moex}(\pi)=m$. Consider the following double sum
{\allowdisplaybreaks \begin{align*}
 \sum_{n=0}^{\infty} \sum_{m=1}^{\infty} p_d^{moex}(m, n)z^m q^n &= (-q^2; q^2)_{\infty}\sum_{k=0}^{\infty} z^{2k+1}q^{1+3+\cdots+(2k-1)} (-q^{2k+3}; q^2)_{\infty} \nonumber\\
&= (-q^2; q^2)_{\infty}\sum_{k=0}^{\infty} z^{2k+1} q^{k^2}(-q^{2k+3}; q^2)_{\infty} \nonumber \\
&= (-q; q^2)_{\infty} (-q^2; q^2)_{\infty} \sum_{k=0}^{\infty} \frac{z^{2k+1} q^{k^2}(-q^{2k+3}; q^2)_{\infty}}{(-q; q^2)_{\infty}} \nonumber \\
&=  (-q; q)_{\infty} \sum_{k=0}^{\infty}  \frac{z^{2k+1} q^{k^2}}{(-q; q^2)_{k+1}}.
 \end{align*} }
 Differentiate with respect to $z$ and then put $z=1$ to get
{\allowdisplaybreaks \begin{align*}\label{gfn_sigma_dmoex}
  \sum_{n=0}^{\infty} \sigma_d \textup{moex}(n)q^n &= (-q; q)_{\infty} \sum_{n=0}^{\infty} \frac{(2n+1) q^{n^2}}{(-q; q^2)_{n+1}}\\
  &= (-q; q)_{\infty} \sum_{n=0}^{\infty} \frac{(2n+1) q^{n^2}}{(-q; q^2)_{n}}\left\{ 1-\frac{q^{2n+1}}{1+q^{2n+1}}\right\} \\
 &= (-q; q)_{\infty} \sum_{n=0}^{\infty} \frac{(2n+1) q^{n^2}}{(-q; q^2)_{n}} - (-q; q)_{\infty} \sum_{n=0}^{\infty}\frac{(2n+1) q^{(n+1)^2}}{(-q; q^2)_{n+1}}\\
 &= (-q; q)_{\infty} \sum_{n=0}^{\infty} \frac{(2n+1) q^{n^2}}{(-q; q^2)_{n}} - (-q; q)_{\infty} \sum_{n=1}^{\infty} \frac{(2n-1) q^{n^2}}{(-q; q^2)_{n}}\\
 &= (-q; q)_{\infty} + 2(-q; q)_{\infty}\sum_{n=1}^{\infty} \frac{q^{n^2}}{(-q; q^2)_{n}}.
  \end{align*} }
  Alternatively, using Chern's result \cite[Proposition 2.1]{chernmaxexcludnt} with $x=1, y=-1$ gives us
  \begin{equation*}
   \sum_{n=0}^{\infty} \sigma_d \textup{moex}(n)q^n = (-q; q)_{\infty} + 2(-q; q)_{\infty}\sum_{n=1}^{\infty} (-1)^{n-1} q^n (q^2; q^2)_{n-1}.
  \end{equation*}
\end{proof}

\begin{proof}[Theorem {\rm \ref{MAEX_D(N)}}][]
Suppose $\pi_d$ is a distinct parts partition of $n$ with maximal excludant $k$. 
Note that $k \geq 1$, since partitions with maximal excludant $0$ do not contribute to the sum $\sum_{\pi \in \mathcal{D}(n)} \textup{maex}(\pi) = 
\sigma_d \textup{maex}(n)$. 
We can divide $\pi_d$ into two components, ${\pi_d}'$ and  ${\pi_d}''$:\\
The first  component ${\pi_d}'$ is a distinct parts partition with parts $\leq k-1$; and the second one  ${\pi_d}''$ is a gapfree distinct parts partition with $s(\pi)=k+1$,  i.e.,  each integer between $s(\pi)$ and $\ell(\pi)$ also occurs as a part.
Observe that the second component ${\pi_d}''$ upon conjugation gives a gapfree partition in which the smallest part $s(\pi)=1$ and the 
largest part $\ell(\pi)$ appears exactly $k+1$ times and all other parts appear exactly once.
We consider a two variable generating function $D(z,q)$ for $p_{d, k}(n)$, the number of distinct parts partitions of $n$ with
maximal excludant $k$. In $D(z, q)$, the exponent of $z$ indicates the maximal excludant of a partition $\pi_d$ into
distinct parts, and the exponent of $q$, as always, keeps track of the number being partitioned by $\pi_d$.
\begin{align*}
D(z,q):= \sum_{n=0}^{\infty} \sum_{k=1}^{\infty} p_{d, k}(n) z^k q^n = \sum_{k=1}^{\infty}(-q;q)_{k-1}z^k\sum_{m=1}^{\infty}q^{1+2+...+(m-1)+(k+1)m}.
\end{align*}
Now differentiating $D(z,q)$  with respect to $z$ and substituting $z=1$, we get the generating function for $\sigma_d \textup{maex}(n)$,
\begin{align*}
\sum_{n=0}^{\infty}\sigma_d \textup{maex}(n)q^n=\sum_{k=1}^{\infty}k(-q;q)_{k-1}\sum_{m=1}^{\infty}q^{\frac{m(m+1)}{2}+km}.
\end{align*}
\end{proof}

\section{Concluding Remarks}
Inspired by the work of Andrews and Newman,  in the current paper, we studied the minimal excludant over partitions into distinct parts. 
We have proved that the generating function for $\sigma_d{\rm mex}(n)$ is the product of the generating function for distinct parts partition function 
and Ramanujan's well-known $q$-series $\sigma(q)$. We also established a Hardy-Ramanujan type asymptotic  formula for  $\sigma_d{\rm mex}(n)$. 
It would be interesting to find a Hardy-Ramanujan-Rademacher type exact formula for $\sigma_d{\rm mex}(n)$, analogous to the result \eqref{HRR type} of Grabner and Knopfmacher for $\sigma{\rm mex}(n)$. 

We also examined $a_d(n)$, 
which counts the number of distinct parts partitions with an odd minimal excludant.  Quite surprisingly,  we have observed that the generating function for $a_d(n)$ has been studied by Uncu in a different context,  which immediately improved Uncu's result \cite[Theorem 3]{uncuspin}.  Subsequently,  we studied $\sigma_d{\rm moex}(n)$ and 
its generating function has been expressed as the product of the generating function for the distinct parts partition function and $1+\sigma^{*}(-q)$. It is interesting that the function $\sigma^{*}(q)$
has mostly been seen to appear in the vicinity of $\sigma(q)$.
Recently,  using the theory of modular forms,  Barman and Singh \cite{barman-singh1,  barman-singh2},  and Chakraborty and Ray \cite{Chakraborty-Ray} found interesting congruence properties and density results for Mex-related partition functions.  Readers are encouraged to see the paper of da Silva and Sellers \cite{sellers-dasilva} for parity results and congruence properties related to Mex-related partition functions of Andrews and Newman. 

\textbf{Acknowledgements.}
We sincerely thank the anonymous referee for providing many useful suggestions,  particularly,  for outlining ideas for the proof of Theorem 2.2. The first author wants to thank Prof.  Meenakshi Rana for her continuous support. The second author wants to thank the Department of Mathematics,  Pt. Chiranji Lal Sharma Government College,  Karnal for a conducive research environment. The third author is a SERB National Post Doctoral Fellow (NPDF) supported by the fellowship PDF/2021/001090 and would like to thank SERB for the same. In addition, he wishes to thank IIT Indore for its conducive research environs and also his former employer IISER Berhampur for providing flexibility in working conditions. The last author wishes to thank SERB for the Start-Up Research Grant SRG/2020/000144.


\begin{thebibliography}{00}

\bibitem{andrewslostV1986}
G.~E.~Andrews, \emph{Ramanujan's ``Lost'' Notebook V: Euler's partition identity}, Adv.~Math.~\textbf{61} (1986), 156--164.

\bibitem{andrewsmonthly86}
G.E.~Andrews, \emph{Questions and conjectures in partition theory}, Amer. Math. Monthly~\textbf{93} (1986), 708--711.

\bibitem{adh}
G.E.~Andrews, F.J.~Dyson and D.~Hickerson, \emph{Partitions and indefinite quadratic forms}, Invent.~Math.~\textbf{91} (1988), 391--407.

\bibitem{andrewsnewmanI}
G.~E.~Andrews, D.~Newman, \emph{Partitions and the minimal excludant}, Ann.~Comb. ~\textbf{23} (2019), no. 2, 249--254.

\bibitem{andrewsnewmanII}
G.~E.~Andrews, D.~Newman, \emph{The minimal excludant in integer partitions},  J.~Integer Seq. ~\textbf{23} (2020), Article 20.2.3.


\bibitem{ballantine-merca}
C.~Ballantine, M.~Merca, \emph{Combinatorial proof of the minimal excludant theorem},  Int.  J.  Number Theory,  {\bf 17},  No.~8,  1765--1779 (2021). 


\bibitem{barman-singh1}
R.~Barman,  A.~Singh,  \emph{On Mex-related partition functions of Andrews and Newman},  	Res. Number Theory,  {\bf 7}, 53 (2021).

\bibitem{barman-singh2} 
R.~Barman,  A.~Singh,  \emph{Mex-related partition functions of Andrews and Newman,}  J. Integer Seq.,  {\bf 24} (2021),  Article 21.6.3. 

\bibitem{Chakraborty-Ray}
K.~Chakraborty,  C. ~Ray,  \emph{Distribution of generalized Mex-related integer partitions},  Hardy-Ramanujan Journal, Hardy-Ramanujan Society (Special Commemorative volume in honour of Srinivasa Ramanujan),  \textbf{43} (2021),  122--128. 


\bibitem{chernmaxexcludnt}
S.~Chern, \emph{Partitions and the maximal excludant},  Electron.  J.  Combin.  {\bf 28} (2021),  no. 3, Paper No.  3.13.  

\bibitem{grabner}
P.~J.~Grabner, A.~Knopfmacher, \emph{Analysis of some new partition statistics}, Ramanujan J. ~\textbf{12}(3) (2006), 439--454.
%

\bibitem{Ingham} A.~ E. ~Ingham,  \emph{A Tauberian theorem for partitions},  Ann. of Math. (2) 42 (1941), 1075--1090.


%

\bibitem{sellers-dasilva}
R.~da Silva,  J.~A.~Sellers, \emph{Parity considerations for mex-related partition functions of Andrews and Newman},  J.  Integer Seq. {\bf23},  no. 5 (2020),
Article 20.5.7

 \bibitem{uncuspin}
 A.~K.~Uncu, \emph{On a weighted spin of the Lebesgue identity}, Mathematical Aspects
of Computer and Information Sciences 2019, 273--279.

\bibitem{Zagier} D. ~Zagier,  Quantum modular forms, in: \emph{Quanta of maths}, 659--675, Clay Math. Proc., 11, Amer. Math. Soc., Providence, RI, 2010.

\end{thebibliography}
\end{document}